\documentclass[a4paper,11pt]{article}
 \usepackage[english]{babel}
\usepackage[a4paper]{geometry}
\usepackage{amsmath}
\usepackage{amsthm}
\usepackage{amssymb}
\usepackage{bbm}
\usepackage{color}
\usepackage[dvipsnames]{xcolor}

\usepackage{tikz}
\usetikzlibrary{positioning}
\usetikzlibrary{arrows}
\usepackage{caption}
\usepackage{subcaption}
 
\usepackage{mathrsfs}
\usepackage{enumerate}
 
\usepackage{hyperref} 

\def\bE{\mathbb{E}}

\def\be{\begin{equation}}
\def\ee{\end{equation}}

\DeclareMathOperator{\sech}{sech}

\newtheorem{theorem}{Theorem}   

\newtheorem*{conj*}{Conjecture}
\theoremstyle{remark}
\newtheorem{remark}[theorem]{Remark}

\begin{document}
\title{On the local nature of the de Almeida-Thouless line for mixed \(p\)-spin glasses}
\author{Jean-Christophe Mourrat\thanks{Department of mathematics, ENS Lyon and CNRS, Lyon, France} 
\and Adrien Schertzer\footnotemark[1]}
\maketitle

\begin{abstract}
Jagannath and Tobasco~\cite{JagTob} proposed a generalized de Almeida-Thouless (AT) criterion aimed at characterizing the replica symmetric (RS) regime for a broad class of mixed \(p\)-spin glass models with Ising spins. In this paper, we show that this generalized AT condition does not characterize the RS regime in general. Using the Hopf-Lax representation of the Parisi formula, we construct explicit counterexamples within the class of mixed \(p\)-spin models. In particular, we exhibit a model in which the classical AT perturbation is performed around the unique minimizer of the RS free energy, and prove that even in this setting, the AT criterion fails to characterize the RS phase. By contrast, the validity of the classical AT condition for the Sherrington-Kirkpatrick model remains open.
\end{abstract}


\section{Introduction} 

\subsection{The model}

We study mixed $p$-spin glass models, which we now briefly recall.
Let $(\beta_p)_{p\ge2}$ be a sequence of non-negative coefficients, which for
simplicity we assume to have only finitely many non-zero entries, and define
\[
\xi_0(r)=\sum_{p\ge2} \beta_p\, r^p,
\qquad r\in\mathbb R.
\]
Consider the hypercube $\Sigma_N=\{-1,1\}^N$ equipped with the Hamiltonian
$H_N=(H_N(\sigma))_{\sigma\in\Sigma_N}$, a centered Gaussian process with
covariance
\[
\mathbb{E}\bigl[H_N(\sigma^1)H_N(\sigma^2)\bigr]
=
N\,\xi_0\!\left(\frac{\sigma^1\cdot\sigma^2}{N}\right).
\]
Such a field can be constructed explicitly as a linear combination of independent
Gaussian tensors,
\[
\sum_{1\le i_1,\dots,i_p\le N}
J_{i_1,\dots,i_p}\,\sigma_{i_1}\cdots\sigma_{i_p},
\]
where the coefficients $(J_{i_1,\dots,i_p})$ are i.i.d.\ standard Gaussian random
variables.  The classical Sherrington-Kirkpatrick (SK) model \cite{SK} corresponds to the case with $\xi_0(r)=\tfrac12 r^2$. The associated partition function is defined by
\[
Z_N(\beta, h)
=
\frac{1}{2^N}\sum_{\sigma\in\Sigma_N}
\exp\bigl(\beta H_N(\sigma)+h \sum_{i=1}^N \sigma_i \bigr)=: E_0\exp\bigl(\beta H_N(\sigma)+h \sum_{i=1}^N \sigma_i \bigr),
\]
where $\beta>0$ denotes the inverse temperature, $h \in \mathbb R$ is an external field, and we denote by $E_0$ the expectation with respect to the uniform measure on $\Sigma_N$. A central object of interest is the limit of the free energy
\[
F_N(\beta,h):=\frac1N \bE\log Z_N(\beta, h),
\]
as $N$ tends to infinity, whose identification is given by the celebrated Parisi formula.

\subsection{The Parisi formula}

In the next two sections, we follow the conventions of \cite{JagTob}.
Set $\xi(r)=\beta^2\xi_0(r)$.
The Parisi functional is defined by
\begin{equation}
\label{eq:ParisiFunctional}
\mathcal P(\mu;\xi_0,\beta,h)
=
u_\mu(0,h)
-
\frac12
\int_0^1 \xi''(s)\,\mu([0,s])\,s\,ds,
\end{equation}
where $\mu\in\mathcal P([0,1])$ is a probability measure on the unit interval, and
$u_\mu$ denotes the unique weak solution of the Parisi partial differential equation
\begin{equation}
\label{eq:ParisiPDE}
\begin{cases}
\partial_r u_\mu(r,x)
+
\dfrac{\xi''(r)}{2}\,\partial_{xx}u_\mu(r,x)
+
\mu([0,r])\bigl(\partial_x u_\mu(r,x)\bigr)^2
=0,
& (r,x)\in(0,1)\times\mathbb R,\\[0.2cm]
u_\mu(1,x)=\log\cosh(x).
\end{cases}
\end{equation}
The Parisi formula states that
\begin{equation}
\label{eq:ParisiVariational2}
\lim_{N\to\infty}F_N(\beta,h)
=
\min_{\mu\in\mathcal P([0,1])}
\mathcal P(\mu;\xi_0,\beta,h).
\end{equation}
Rigorous proofs of this formula were obtained in
\cite{Gue,Tal,Tal1,Tal2,P}, following the seminal insights of physicists
\cite{MPV}.

Depending on the parameters $(\beta,h)$, the minimizer $\mu$ may have different
structures. The unique minimizer (see \cite{AC1}) is called the Parisi measure.
If $\mu$ is a Dirac measure, the system is said to be in the
\emph{replica symmetric} (RS) phase.
A fundamental problem is to characterize the region in the $(\beta,h)$-plane
where the Parisi measure is replica symmetric.

\subsection{The generalized AT line}

Let $Z$ be a standard Gaussian random variable and define
\begin{align*}
Q^*(\beta,h)
&=
\Bigl\{q\in[0,1]:
\mathbb E\bigl[\tanh^2(\xi'(q)Z+h)\bigr]=q\Bigr\},\\
\alpha(q,\beta,h)
&=
\xi''(q)\,
\mathbb E\bigl[\sech^4(\xi'(q)Z+h)\bigr],\\
\alpha(\beta,h)
&=
\min_{q\in Q^*(\beta,h)}\alpha(q,\beta,h).
\end{align*}
Following \cite{JagTob}, we call the level set $\{\alpha(\beta,h)=1\}$ the \emph{generalized de Almeida-Thouless (AT) line}.
We define
\begin{equation}
\label{eq:ATdef}
\mathrm{AT}
:=
\bigl\{(\beta,h):\alpha(\beta,h)\le1\bigr\},\\
\end{equation}
\begin{equation}
\label{eq:RSdef}
\mathrm{RS}
:=
\bigl\{(\beta,h): \text{the Parisi measure is a Dirac measure } \delta_q \text{ for some } q\in[0,1]\bigr\}.
\end{equation}
Jagannath and Tobasco conjectured that these two regions coincide
\cite[Conjecture~1.6]{JagTob}.

It was shown in \cite[Theorem~1.8]{JagTob} and in \cite{Ch0} that, for any model $\xi_0$,
one always has the inclusion $\mathrm{RS}\subset\mathrm{AT}$. We show here that the converse inclusion is false in general.

\begin{theorem}\label{theorem:main}
There exist mixed \(p\)-spin models \(\xi_0\) with \(\xi_0''(0)>0\) such that
\[
\mathrm{AT}\not\subset \mathrm{RS}.
\]
\end{theorem}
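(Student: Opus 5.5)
We will exhibit a model \(\xi_0\) and a point \((\beta,h)\) where \(\alpha(\beta,h)\le 1\) but the Parisi measure is not a Dirac mass. The starting observation is that the AT criterion is \emph{local}. It tests the stability of the RS functional \(q\mapsto \mathcal P(\delta_q)\) at critical points \(q\in Q^*\) against infinitesimal perturbations of \(\delta_q\), namely adding a small mass near \(q\). Replica symmetry breaking can instead come from a \emph{global} competitor: a measure putting mass near a point \(q'\) far from \(q\). The plan is to choose \(\xi_0\) so that \(\xi''\) is small near the RS critical point \(q\), which makes \(\alpha(q,\beta,h)<1\), while \(\xi\) is large on some interval away from \(q\). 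Then a two-atom measure \(\mu=(1-m)\delta_{q}+m\delta_{q'}\) strictly lowers \(\mathcal P\).

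\textbf{Step 1: the model.} Take \(\xi_0(r)=\beta_2 r^2+\beta_P r^P\) with \(P\) large. We may also add a third term if needed to keep \(\xi_0''(0)>0\) and to control \(Q^*\). Fix \(h\) moderately large so that the relevant RS overlap \(q\) is bounded away from 1. Then \(\xi_P''(q)=\beta^2\beta_P P(P-1)q^{P-2}\) is exponentially small in \(P\) at \(q\), but \(\xi(1)\) can be made large by taking \(\beta_P\) large. One complication is that \(Q^*\) may then contain a second fixed point near 1. Since \(\alpha(\beta,h)\) is a \emph{minimum} over \(Q^*\), this only helps: we need just one \(q\in Q^*\) with \(\alpha(q)\le1\). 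We must still check that at least one fixed point satisfies \(\alpha\le 1\). For the small-\(q\) fixed point this follows from \(\xi''(q)\approx 2\beta^2\beta_2+(\text{tiny})\) together with \(\mathbb E\,\sech^4\le1\), choosing \(\beta^2\beta_2<1/2\).

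\textbf{Step 2: non-RS.} We show that no \(\delta_{q''}\) is optimal. The cleanest way is the known first-order optimality characterization of the Parisi measure (Jagannath--Tobasco, Chen). If \(\delta_{q''}\) were optimal, then \(q''\in Q^*\) and the function
\[
\phi(s)=\int_0^s\bigl(\xi''(t)\,(\mathbb E[(\partial_x u(t,X_t))^2]-t)\bigr)\,dt
\]
would be maximized at \(q''\). For \(\delta_{q''}\) this function can be computed explicitly through Gaussian expectations: for \(t\ge q''\) the PDE is linear, \(u\) is a heat-kernel smoothing of \(\log\cosh\), and \(\mathbb E(\partial_xu)^2=\mathbb E\tanh^2\bigl(\sqrt{\xi'(q'')}\,Z+\sqrt{\xi'(t)-\xi'(q'')}\,Z'+h\bigr)\). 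We then verify, for each candidate \(q''\in Q^*\), that \(\phi(1)>\phi(q'')\) or a similar violation holds. The violation comes from the huge \(\xi''\) near \(1\) multiplying a positive quantity, since \(\mathbb E\tanh^2-t>0\) for \(t\) just below 1 when the effective field is large. Alternatively, the Hopf--Lax representation gives an explicit upper bound on \(\mathcal P\) for a two-atom trial measure, to be compared with the RS value.

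\textbf{Main obstacle.} The delicate point is to control simultaneously every element of \(Q^*\): the high-overlap fixed point created by the \(r^P\) term must also fail to be optimal. We must also keep enough quantitative control of the Gaussian expectations, uniformly in \(P\), to certify strict inequalities. We would first try an asymptotic regime, \(P\to\infty\) with \(\beta_P\) tuned, where all quantities reduce to explicit limits, and then fix a large finite \(P\) by continuity.
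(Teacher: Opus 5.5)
\textbf{Verdict: genuine gap.} Step~2 is where the theorem is actually decided, and it is left as a plan. You never verify $\phi(t)>\phi(q'')$ for any $q''\in Q^*$, and you yourself flag the control of all of $Q^*$ as an unresolved obstacle.

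Moreover, the formula you intend to compute with is wrong. For $\mu=\delta_{q''}$ one has $\mu([0,t])=0$ for $t<q''$, so the PDE is linear \emph{below} $q''$. Above $q''$ one is in the Cole--Hopf regime: $\partial_x u(t,\cdot)=\tanh$, but $X_t$ carries the drift $\xi''(t)\tanh(X_t)$. Its law is therefore a $\cosh$-tilted Gaussian, and $\mathbb E(\partial_x u)^2\neq\mathbb E\tanh^2(h+\sqrt{\xi'(t)}Z)$.

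This error is not cosmetic. With your untilted expression one gets $\phi'(t)=-2\,\frac{d}{dt}\mathcal P(\delta_t)$, hence $\phi(t)-\phi(q'')=2\bigl(\mathcal P(\delta_{q''})-\mathcal P(\delta_t)\bigr)$ on $[q'',1]$. Your test would then only detect that $q''$ is not a global minimizer of the RS functional. That is exactly the refined criterion of Remark~\ref{rem:singl}, which the paper shows is also insufficient.

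Your claimed mechanism is also backwards near $t=1$. Since $\mathbb E\tanh^2(X_1)<1$, the integrand is negative just below $t=1$, not positive.

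With the correct tilt, one gets $1-\mathbb E\tanh^2(X_s)\le e^{-(\xi'(s)-\xi'(q''))/2}$. Integrating against $\xi''$ gives $\phi(1)-\phi(q'')\ge\xi(1)-\xi(q'')-(1-q'')\xi'(q'')-2$. This would rule out the low fixed point once $\beta_P$ is large. The fixed points near $1$ would still need their own argument, for instance showing $\alpha(q'')>1$ there.

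The simplification you are missing, and which the paper uses, is to take $h=0$. Then $0\in Q^*(\beta,0)$ automatically. Moreover $\alpha(0,\beta,0)=\beta^2\xi_0''(0)$ does not see any $r^p$ term with $p\ge3$. So with $\xi_0(r)=\frac12r^2+\frac1p(Cr)^p$, every $\beta<1$ lies in $\mathrm{AT}$ whatever $C$ is.

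On the RS side, at $h=0$ the point $0$ lies in the support of the Parisi measure (Auffinger--Chen). So RS forces the Parisi measure to be $\delta_0$, i.e.\ $\lim_N F_N(\beta,0)=\frac{\beta^2}{2}\xi_0(1)$. Non-RS thus reduces to one strict inequality against this annealed value.

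The Hopf--Lax bound with $l=C^{-1}$ supplies that inequality. The gain $-C^{-2}$ from $\mathbb E\log\cosh(\sqrt{2l}\,z_1)-l$ beats the conjugate term, which is $O(C^{-2p/(p-1)})$. No description of $Q^*$ and no Parisi-PDE computation is needed.
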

\begin{remark} \label{rem:singl}
It is natural to suspect that the issue with the criterion from~\cite{JagTob} lies in the choice of the ``correct'' overlap parameter \(q\); more precisely, instead of taking a minimum over all $q \in Q^*(\beta,h)$ in the definition of $\alpha(\beta,h)$, one should evaluate $\alpha(\cdot, \beta, h)$ at a minimizer of the RS functional
\begin{equation}  
\label{e.RS}
\left\{
\begin{array}{rcl}  
[0,1] & \to & \mathbb R \\
q & \mapsto & \mathcal P(\delta_q; \xi_0, \beta, h). 
\end{array}
\right.
\end{equation}
A minimizer of this function is automatically an element of $Q^*(\beta,h)$, so by using the set of minimizers of this function in place of $Q^*(\beta,h)$ in the definition of $\alpha(\beta,h)$, we make the set $\mathrm{AT}$ smaller; one can ask if that smaller set is indeed a subset of $\mathrm{RS}$. 

We will show that this is also false. Precisely, we will construct examples of models $\xi_0$ with $\xi_0''(0) > 0$ and choices of $\beta$ and $h$ for which the function in \eqref{e.RS} has a unique minimizer $q(\beta,h)$ and we have $\alpha(q(\beta,h),\beta,h) < 1$, and yet the corresponding Parisi measure is not a Dirac mass.
\end{remark}

\begin{remark}
\label{r.pure.psin}
The assumption \(\xi_0''(0)>0\) is natural and excludes a trivial degeneracy of the AT condition. Indeed, if \(\xi_0''(0)=0\) and \(h=0\), then \(\alpha(\beta,0)=0\) for all \(\beta \geq 0\). In particular,
\[
\mathrm{AT} \cap \{h=0\} = \mathbb{R}\times\{0\},
\]
which cannot coincide with the RS region (see, e.g., ~\cite[Theorem~3]{AC}). 
\end{remark}





\subsection{The AT criterion}

In the specific case of the SK model, the de Almeida-Thouless condition was originally
derived by de Almeida and Thouless~\cite{AT} as a criterion for the instability of
the replica symmetric solution.
One of the first rigorous mathematical results in this direction was obtained by
Toninelli~\cite{Ton}, who proved the inclusion $\mathrm{RS}\subset\mathrm{AT}$.
The Parisi functional admits a first-order optimality condition of the following
form: a probability measure $\mu$ on $[0,1]$ minimizes the Parisi functional if and
only if it cannot be improved by any \emph{mixing variation}, that is, by replacing
an infinitesimal fraction of $\mu$ by a Dirac mass at an arbitrary point
$q\in[0,1]$. Equivalently, for every $q$, the directional derivative of the Parisi
functional in the direction $\delta_q-\mu$ must be non-negative. This is made
possible by the convexity of the Parisi functional (see \cite{JagTob, Ch0} for more details).

In the replica symmetric case $\mu=\delta_{q^*}$, this condition has a simple
interpretation: the parameter $q^*$ must be a global minimizer of a certain
one-dimensional function $G_{\mu}(t)$ naturally associated with $\mu$ (see, for
instance, Proposition~1.1 in \cite{JagTob}). The classical de Almeida-Thouless
condition is recovered by studying this function locally at $t=q^*$: the vanishing
of its first derivative yields the RS self-consistency equation (the
equation characterizing $Q^*(\beta,h)$), while the sign of its second derivative
produces the classical AT stability condition for the SK model \cite{AT}, namely
$\alpha(q^*,\beta,h)\le 1$. 


We also mention that Chen~\cite{Ch}  proved that the de Almeida-Thouless condition does characterize the replica symmetric region for the SK model for $h = 0$ (also allowing for the presence of an additional centered Gaussian external field), a result that was recently extended to spins-glass models with multiple species in \cite{K}. For the SK model with $h = 0$, Zhou~\cite{Z1}  described the precise transition to a full RSB regime just below the critical temperature predicted by the AT criterion. For other pure $p$-spin models, i.e.\ in the case $\xi(r) = r^p$ for an integer $p \ge 3$, and for $h = 0$, the AT criterion becomes vacuous, as discussed in Remark~\ref{r.pure.psin}. In this case, Zhou~\cite{Z2} identified the transition temperature and described the transition from the RS to the 1RSB regime. 

A failure of a local optimality condition in the spirit of the AT criterion was also observed in~\cite[Section 2.1]{P1} for a spin-glass model with spins taking values in $\{-1,0,1\}$.

In~\cite{Pl}, Plefka suggested that two distinct stability conditions may arise in
the SK model, and an interpretation of Plefka's conditions was later proposed
in~\cite{GSS}.

Finally, in~\cite{Bolt2, BY}, the authors investigated the de Almeida-Thouless
condition starting from the finite-$N$ system with algorithms, rather than from the Parisi
variational formula, thereby avoiding the heavy Parisi machinery.

\section{Proof of Theorem~\ref{theorem:main}}

\begin{proof}

\emph{\textbf{Step~1. Hopf-Lax representation of the Parisi formula.}}\\

For every $\beta,l\ge0$, we introduce the \emph{enriched} free energy
\begin{equation}
\label{eq:enriched}
F_N^{\circ}(\beta,l)
:=
-\frac{1}{N}\,
\mathbb{E}\log 
E_0 
\exp\!\Bigl(
\beta\, H_N(\sigma)
-\frac{N\beta^2}{2}\,\xi_0(1)
+ \sqrt{2l}\, z\cdot\sigma
- l\, N
\Bigr),
\end{equation}
where $z=(z_1,\dots,z_N)$ is a vector of independent standard Gaussian random
variables, independent of $H_N$.
With this normalization, the annealed free energy vanishes identically. In
particular,
\[
F_N^{\circ}(0,l)
=
F_1^{\circ}(0,l)
=
-\,\mathbb{E}\log 
\cosh\!\bigl(\sqrt{2l}\, z_1\bigr)
+ l.
\]

Let $\xi^\ast$ denote the convex conjugate of $\xi$,
\[
\xi^\ast(a)
=
\sup_{r\ge0}\bigl(ar-\xi(r)\bigr),
\qquad a\ge0.
\]
Setting $t=\beta^2/2$, Proposition~4.1 of~\cite{MourratParisi} yields, for every
$t>0$,
\begin{align}
\label{eq:HopfLax}
\lim_{N\to\infty}
\frac1N\,\mathbb E \log 
E_0 \exp\!\bigl( \sqrt{2t}\, H_N(\sigma) \bigr)
\le  
t\,\xi(1)
-
\sup_{l \ge 0}
\Bigl\{
F_N^{\circ}(0,l)
-
t\,\xi^\ast\!\left(t^{-1}l\right)
\Bigr\}.
\end{align}
Consequently, 
\begin{align*}
\lim_{N\to\infty}
\frac1N\,\mathbb E \log 
E_0 \exp\!\bigl( \sqrt{2t}\, H_N(\sigma) \bigr)
&\leq
t\,\xi(1)
+
\mathbb{E}\log \cosh\!\bigl(\sqrt{2l}\, z_1\bigr)
- l
+
t\,\xi^\ast\!\left(t^{-1}l\right)\\
&=
t\,\xi(1)
- l^2 + o(l^2)
+
t\,\xi^\ast\!\left(t^{-1}l\right),
\end{align*}
where the last line follows from the Taylor expansion as $l\to0$.

\vspace{0.3cm}

\emph{\textbf{Step~2. Choice of the model.}}

\vspace{0.3cm}

We now specialize to the case of zero external field, \(h=0\), and we claim that
\[
\mathrm{RS}\cap(\mathbb{R}_+\times\{0\})
=
\bigl\{(\beta,0): \lim_{N\to\infty} F_N(\beta,0)
=\tfrac{\beta^2}{2}\,\xi_0(1) \bigr\}.
\]
This follows from the fact that, by definition,
\[
\mathrm{RS}
:=
\bigl\{(\beta,h): \text{the Parisi measure is a Dirac measure } \delta_q \text{ for some } q\in[0,1]\bigr\},
\]
and from the observation that, when \(h=0\), the point \(0\) always belongs to the
support of the Parisi measure (see~\cite[Theorem~1]{AC}). Consequently, in the
RS regime at zero field, the Parisi measure must be \(\delta_0\) (by uniqueness
of the Parisi minimizer).  In this case, a direct inspection of the Parisi
formula shows that
\[
\lim_{N\to\infty} F_N(\beta,0) = \frac{\beta^2}{2}\,\xi_0(1),
\]
and the claim follows. We now consider the mixed model
\[
\xi_0(r)=\frac12 r^2+\frac{1}{p}(Cr)^p,
\]
where $p\ge3$ is fixed and $C>0$ will be chosen large. This corresponds to a
superposition of the SK model with a $p$-spin interaction at low temperature.
Due to the local nature of the AT condition, this additional interaction is not
detected. For this choice of $\xi_0$, we estimate
\begin{align*}
\xi_0^\ast\!\left(\frac{l}{t}\right)
&=
\sup_{r\ge0}
\Bigl(\frac{l}{t}r-\frac12 r^2-\frac{1}{p}(Cr)^p\Bigr)
\le
\sup_{r\ge0}
\Bigl(\frac{l}{t}r-\frac{1}{p}(Cr)^p\Bigr)
=
\frac{p-1}{p}
\Bigl(\frac{l}{Ct}\Bigr)^{\frac{p}{p-1}}.
\end{align*}
Choosing $l=C^{-1}$, we obtain
\begin{align*}
\lim_{N\to\infty}
\frac1N\,\mathbb E \log 
E_0 \exp\!\bigl( \sqrt{2t}\, H_N(\sigma) \bigr)
\le
t\,\xi_0(1)
-
\frac{1}{C^2}
+
o\!\left(\frac{1}{C^2}\right)
+
t\,\frac{p-1}{p}
\Bigl(\frac{1}{C^2 t}\Bigr)^{\frac{p}{p-1}}.
\end{align*}
For $C=C(\beta,p)$ sufficiently large, this yields
\[
\lim_{N\to\infty}
\frac1N\,\mathbb E \log 
E_0 \exp\!\bigl( \beta\, H_N(\sigma) \bigr)
<
\frac{\beta^2}{2}\,\xi_0(1).
\]
Thus, for any fixed $\beta$, one can choose $C(\beta,p)$ large enough so that the
resulting model is not replica symmetric.

We now examine what the generalized AT criterion predicts in this situation.
For the present model, one has $0\in Q^*(\beta,0)$, and therefore
\[
\alpha(\beta,0)
=
\min_{q\in Q^*(\beta,0)} \alpha(q,\beta,0)
\le \alpha(0,\beta,0)
= \beta^2.
\]
In particular, for every $\beta<1$, we have $(\beta,0)\in \mathrm{AT}$,
independently of the value of $C$. However, for every $\beta<1$, one may choose
$C(\beta,p)$ large enough so that the system is no longer replica symmetric.
This shows that the criterion of~\cite{JagTob} fails to characterize the $\mathrm{RS}$ region. It remains to prove what we claim in Remark \ref{rem:singl}. We define the critical
temperature
\[
\beta_c:=\sup\{\beta\ge0:\ (\beta,0) \in  \mathrm{RS}\}.
\]
As observed above, this is equivalent to
\[
\beta_c:=\sup\{\beta\ge0:\ \lim_{N\to\infty} F_N(\beta,0)
=\tfrac{\beta^2}{2}\,\xi_0(1)\}.
\]
By taking $C$ large enough, we have that $\beta_c<1$. By continuity of $\beta\mapsto \lim_{N\to\infty} F_N(\beta,0)$ (since each $F_N(\beta,0)$ is convex in $\beta$ and $F_N(\beta,0)\to F(\beta)$ pointwise with a finite limit, the limiting free energy $F$ is also convex in $\beta$), we have
\[
\lim_{N\to\infty}  F_N(\beta_c,0)
=\tfrac{\beta_c^2}{2} \xi_0(1).
\]
Recall that for every $\beta$, whenever the Parisi minimizer is a Dirac mass, the limiting free energy in the RS regime is given by
\[
\inf_{q\in[0,1]} F_{\mathrm{RS}}(q, \beta),
\]
where
\[
F_{\mathrm{RS}}(q, \beta)
=
\mathbb{E}\!\left[\log\cosh\!\big(\beta \sqrt{\xi_0'(q)}\,Z\big)\right]
+\frac{\beta^2}{2}\Big(\xi_0(1)-\xi_0(q)-(1-q)\xi_0'(q)\Big),
\]
with \(Z\sim\mathcal N(0,1)\). This follows directly from the Parisi formula \eqref{eq:ParisiVariational2} when the minimizer is a Dirac measure.
By uniqueness of the Parisi minimizer, we deduce that $0$ is the unique minimizer
of $F_{\mathrm{RS}}(\cdot, \beta_c)$. We now prove that $0$ is still the unique minimizer of $F_{\mathrm{RS}}(\cdot,\beta_c+\delta)$ for some $\delta > 0$ small enough.  At \(\beta=\beta_c\), the RS functional \(F_{\mathrm{RS}}(\cdot,\beta)\) admits a
\emph{non-degenerate local minimum} at \(q=0\), that is,
\[
\partial_q F_{\mathrm{RS}}(0,\beta_c)=0
\qquad\text{and}\qquad
\partial_q^2 F_{\mathrm{RS}}(0,\beta_c)=\frac{\beta_c^2}{2}(1-\beta_c^2)>0.
\]
Thus, there exist $\varepsilon\in(0,1)$ and $c>0$ such
that
\[
F_{\mathrm{RS}}(q,\beta_c)\ge F_{\mathrm{RS}}(0,\beta_c)+c q^2,
\qquad q\in[0,\varepsilon].
\]
Next, consider the compact set $[\varepsilon,1]$. By continuity of
$(q, \beta)\mapsto F_{\mathrm{RS}}(q,\beta)$, the function
\[
q\longmapsto F_{\mathrm{RS}}(q,\beta_c)-F_{\mathrm{RS}}(0,\beta_c)
\]
attains its minimum on $[\varepsilon,1]$, and by uniqueness of the minimizer at
$\beta_c$ this minimum is strictly positive. Denote
\[
\eta:=\min_{q\in[\varepsilon,1]}
\bigl(F_{\mathrm{RS}}(q,\beta_c)-F_{\mathrm{RS}}(0,\beta_c)\bigr)>0.
\]
By uniform continuity on the compact set $[\varepsilon,1] \times [\beta_c,\beta_c+1]$, there exists
$\delta_0>0$ such that for all $\beta\in[\beta_c,\beta_c+\delta_0]$ and all
$q\in[\varepsilon,1]$, 
\begin{align*}
F_{\mathrm{RS}}(q,\beta)&=F_{\mathrm{RS}}(q,\beta_c)+F_{\mathrm{RS}}(q,\beta)-F_{\mathrm{RS}}(q,\beta_c)\\
&\ge F_{\mathrm{RS}}(q,\beta_c)-\frac{\eta}{2}\\
&=  F_{\mathrm{RS}}(q,\beta_c)-F_{\mathrm{RS}}(0,\beta_c)+ F_{\mathrm{RS}}(0,\beta_c)-\frac{\eta}{2},\\
&\geq F_{\mathrm{RS}}(0,\beta_c)+\frac{\eta}{2},
\end{align*}
the last line by definition of $\eta$. Combining the continuity of $\beta\mapsto F_{\mathrm{RS}}(0,\beta)$ with the latter equation, we obtain
\begin{align*}
F_{\mathrm{RS}}(q,\beta) \geq F_{\mathrm{RS}}(0,\beta)+\frac{\eta}{4}.
\end{align*}
On the other hand, the local quadratic bound around $0$ persists by uniform continuity,
so that, for all $\epsilon_1 > 0$,
$\beta\in[\beta_c,\beta_c+\delta(\epsilon_1)]$ and all $q\in[0,\varepsilon]$, 
\begin{align*}
F_{\mathrm{RS}}(q,\beta)&=F_{\mathrm{RS}}(0,\beta)+F_{\mathrm{RS}}(q,\beta)-F_{\mathrm{RS}}(0,\beta)\\
&\ge F_{\mathrm{RS}}(0,\beta)+F_{\mathrm{RS}}(q,\beta_c)-F_{\mathrm{RS}}(0,\beta_c) -2\epsilon_1\\
& \geq F_{\mathrm{RS}}(0,\beta)+cq^2 -2\epsilon_1.
\end{align*}
Choosing $\delta = \min\bigl(\delta_0,\ \delta(\varepsilon_1)\bigr)$ and combining the two estimates, we conclude that $q=0$ remains the unique global
minimizer of $q\mapsto F_{\mathrm{RS}}(q,\beta)$ for all
$\beta\in[\beta_c,\beta_c+\delta]$.
This concludes the proof and the claim of Remark \ref{rem:singl}.
\end{proof}

\medskip

\noindent \textbf{Acknowledgements.} We gratefully acknowledge the support of the ERC MSCA grant SLOHD (101203974).

\bigskip


\end{document}